\def\cl{\text{cl}}
\def\cont{{2^{\aleph_0}}}
\def\conc{{^\smallfrown}}
\def\intr{\mbox{int}\ }
\def\cl#1{%
\overline{#1}
}
\newtheoremstyle{theorem}
  {}
  {}
  {\itshape}
  {}
  {\bfseries\scshape}
  {.}
  {.5em}
  {}
\theoremstyle{theorem}
\newtheorem*{theorem*}{Theorem}
\newtheorem{theorem}[subsection]{Theorem}
\newtheorem{proposition}[subsection]{Proposition}
\newtheorem{observation}[subsection]{Observation}
\newtheorem{definition}[subsection]{Definition}
\newtheorem*{ack}{Acknowledgements}
\newtheoremstyle{example}
  {}
  {}
  {}
  {}
  {\bfseries\scshape}
  {.}
  {.5em}
  {}
\theoremstyle{example}
\begin{document}
\thispagestyle{empty}
\title{Lonely points revisited}
\author{Jonathan L. Verner}
\address{
Department of Logic, Faculty of Arts, Charles University \\ Palachovo nám. 2, 116 38 Praha 1 \\ Czech Republic}
\email{jonathan.verner{@}matfyz.cz}
\thanks{The author would like to acknowledge the support of GAČR 401/09/H007 Logické základy sémantiky}
\subjclass[2010]{Primary 54D80, 54D40, 54G05}
\keywords{$\beta\omega$; lonely point; weak P-point; irresolvable spaces}

\begin{abstract}
In our previous paper, Lonely points, we introduced the notion of a lonely point, due to P. Simon. A point $p\in X$ is lonely if it is a limit point
of a countable dense-in-itself set, not a limit point a countable discrete set and all countable sets whose limit point it is, form a filter. 
We use the space ${\mathcal G}_\omega$ from a paper of A. Dow, A.V. Gubbi and A. Szymański (\cite{DGS88}) to construct lonely points in $\omega^*$. 
This answers the question of P. Simon posed in our paper Lonely points (\cite{Ver08}).
\end{abstract}
\maketitle

\section{Introduction}

\begin{definition} A \emph{topological type} in a space $X$ is a subset $T\subseteq X$ which is invariant under homeomorphisms.
\end{definition}

An example of a topological type are the discrete points in a space $X$. Another more interesting type is given in the following
definition. The first part is due to W. Rudin (\cite{Rudin56}) the second to K. Kunen (\cite{Kunen78}).

\begin{definition}[Rudin, Kunen] A point $x\in X$ is a \emph{P-point} if the countable intersection of neighbourhoods of $x$ is again a neighbourhood of $x$.
It is a \emph{weak P-point} if it is not a limit point of a countable subset of $X$.
\end{definition}

Clearly any isolated point is a P-point, and a P-point is a weak P-point. However none of the implications can be reversed.

If a space contains two distinct topological types, then it is not homogeneous. The motivation for finding topological 
types in $\omega^*$ was given by the following surprising result of Z. Frolík (\cite{Frolik67a},\cite{Frolik67b}):

\begin{theorem}[Frolík] $\omega^*$ is not homogeneous.
\end{theorem}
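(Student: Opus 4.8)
The plan is to reduce homogeneity to the existence of a proper nonempty topological type. Indeed, if $T\subseteq\omega^*$ is invariant under all self-homeomorphisms with $T\neq\emptyset$ and $T\neq\omega^*$, then picking $x\in T$ and $y\in\omega^*\setminus T$ we see that no self-homeomorphism of $\omega^*$ can send $x$ to $y$ (its restriction would have to carry $T$ onto $T$), so $\omega^*$ is not homogeneous. Thus it suffices to isolate one such type. The natural candidate, in the spirit of the definitions above, is the set $W$ of weak $P$-points: since ``being a limit point of a countable subset'' is phrased purely in terms of the topology, $W$ is automatically a topological type.

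The easy half is to show $W\neq\omega^*$, i.e. that genuine limit points of countable sets exist. First I would fix a partition of $\omega$ into infinitely many infinite pieces $\{A_n:n<\omega\}$ and choose, for each $n$, an ultrafilter $d_n\in\omega^*$ with $A_n\in d_n$. The clopen sets $A_n^*$ separate the $d_n$, so $D=\{d_n:n<\omega\}$ is relatively discrete and $C^*$-embedded, whence $\overline{D}$ is a copy of $\beta\omega$. Its remainder $\overline{D}\setminus D$ is a nonempty copy of $\omega^*$, and every point of it is a limit of the countable set $D$, hence is not a weak $P$-point. So $W$ is a proper subset.

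The hard half, and the genuine obstacle, is to show $W\neq\emptyset$, that is, that weak $P$-points exist at all in $\mathsf{ZFC}$. Here the soft invariance arguments give nothing and an actual construction is needed. Under $\mathsf{CH}$ one can sidestep this using the cited theorem of W. Rudin that $P$-points exist: the $P$-points form one topological type and the points constructed above (which are visibly not $P$-points) form another. In $\mathsf{ZFC}$ the existence of weak $P$-points is exactly Kunen's theorem, and invoking it closes the argument. I expect this existence step to be the crux; everything around it is routine.

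Finally, it is worth recording the mechanism behind Frolík's original, purely cardinality-theoretic proof, which avoids exhibiting any special point. The key soft fact is that homeomorphisms commute with limits along ultrafilters: for $h\in\mathrm{Homeo}(\omega^*)$, an index $p\in\omega^*$ and any sequence $\langle x_n\rangle$ in $\omega^*$ one has $h(p\text{-}\lim_n x_n)=p\text{-}\lim_n h(x_n)$, because $h\circ\widetilde{f}$ and the Stone extension of $n\mapsto h(x_n)$ are continuous maps out of $\beta\omega$ into the compact Hausdorff space $\omega^*$ that agree on the dense set $\omega$, hence everywhere (here $\widetilde{f}$ is the extension of $n\mapsto x_n$). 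This rigidity lets one attach to each point the homeomorphism-invariant data of which discrete sums produce it, and a counting argument against $|\omega^*|=2^{\cont}$ then shows the points cannot form a single orbit. Either route yields the theorem; I would present the weak $P$-point dichotomy as the cleanest, treating Kunen's existence result as the one serious input.
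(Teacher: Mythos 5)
Your argument is correct, but be aware that the paper contains no proof of this statement at all: the theorem is quoted from Frol\'{\i}k's papers, and the surrounding text explicitly describes his original argument as a ``clever combinatorial argument'' giving no intrinsically topological reason for non-homogeneity. What you propose is instead precisely the ``honest'' proof that the paper attributes to Kunen --- immediately after stating Kunen's theorem the paper remarks that since $\omega^*$ obviously contains non weak P-points, this yields an honest proof of nonhomogeneity, which is exactly your weak P-point dichotomy spelled out. Both halves of your dichotomy are sound: the reduction of non-homogeneity to a proper nonempty invariant set is immediate, and your construction of non-weak-P-points works, though properness comes more cheaply than you make it --- any countably infinite relatively discrete $D\subseteq\omega^*$ has a limit point by compactness, and such a point already fails to be a weak P-point, so the $C^*$-embedding of $D$ and the copy of $\beta\omega$ are luxuries (they do buy you $2^{\cont}$ many such points, but one suffices). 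The trade-off to note honestly: your route consumes Kunen's 1978 theorem, a far harder result than Frol\'{\i}k's 1967 argument and anachronistic as a proof of it; there is no circularity, since Kunen's construction nowhere uses non-homogeneity, but as a derivation of Frol\'{\i}k's theorem it is a heavy hammer, and under CH your fallback via Rudin's P-points has the same character. Your closing sketch of Frol\'{\i}k's actual mechanism is right in outline, but the counting step is more delicate than ``a counting argument against $|\omega^*|=2^{\cont}$'': what is needed is Frol\'{\i}k's lemma that each point of $\omega^*$ arises as the $p$-limit of a discrete $\omega$-sequence for at most $\cont$ many types of $p$, played against the $2^{\cont}$ types available, so that no single orbit under homeomorphisms can realize them all; as written, your sentence gestures at this bound but does not contain it, so if you intended the sketch as a second complete proof it has a genuine gap, whereas your main route through the weak P-point dichotomy is complete modulo the cited existence theorem.
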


His proof used a clever combinatorial argument but it gave no intrinsically topological reason for the non-homogeneity of $\omega^*$. This
motivated the question whether one can find a ``topologically defined'' topological type --- an ``honest'' proof of nonhomogeneity. 
Under CH, this was answered already by W.~Rudin in \cite{Rudin56} where he proved that P-points exist in $\omega^*$. 
However in ZFC the question remained open for some twenty years.

In his seminal paper \cite{Kunen78}, K. Kunen proved in ZFC that $\omega^*$ contains a weak P-point:

\begin{theorem}[Kunen] $\omega^*$ contains a weak P-point.
\end{theorem}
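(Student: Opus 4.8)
The plan is to construct the weak P-point directly as an ultrafilter $p$ on $\omega$, obtained by a transfinite recursion of length $\cont$ driven by a large independent family. First I would record the combinatorial reformulation: a point $p\in\omega^*$ \emph{fails} to be a weak P-point exactly when it is a limit of some countable $D=\{q_n:n<\omega\}\subseteq\omega^*\setminus\{p\}$, i.e.\ when every $A\in p$ satisfies $A\in q_n$ for some $n$. So I must arrange that for every such $D$ there is a single set $A\in p$ with $A\notin q_n$ for all $n$. Since $q_n\ne p$ there is always a witness $C_n\in p\setminus q_n$, but I cannot simply take a pseudo-intersection of the $C_n$ inside $p$ --- that would make $p$ a P-point, and P-points need not exist in ZFC. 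The separation must therefore be achieved without producing pseudo-intersections for arbitrary countable subfamilies of $p$.

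The main obstacle is a counting mismatch. An ultrafilter is pinned down by deciding the $\cont$ many subsets of $\omega$, so a recursion of length $\cont$ is natural; but $|\omega^*|=2^{\cont}$, so there are $2^{\cont}$ countable subsets $D$ to defeat --- strictly more than $\cont$. One therefore cannot enumerate the threats and kill them one per stage. The essence of Kunen's theorem is exactly to beat all $2^{\cont}$ countable sets using only $\cont$ recursion steps, which forces $p$ to be given enough uniform internal structure that separation holds automatically rather than being engineered case by case.

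The engine I would use is the existence, in ZFC, of an independent family $\{A_\alpha:\alpha<\cont\}$ of infinite subsets of $\omega$ (Hausdorff, Fichtenholz--Kantorovich), refined to an \emph{independent linked} family with the extra feature that countably many of its coordinates can be simultaneously separated. I would then recursively build a $\subseteq$-decreasing, still-independent system of sets: at stage $\alpha$ I decide $p$ on $A_\alpha$ (putting $A_\alpha$ or its complement into the filter) while preserving the independence of the part of the family not yet touched, and I reserve the linked structure as the mechanism of later separation. Taking unions and extending to an ultrafilter yields $p$, with the independent family encoding how $p$ sits in $\omega^*$.

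The crux is the separation lemma that converts independence into the weak P-point property, and I expect it --- together with verifying that independence genuinely survives all $\cont$ stages of the recursion --- to be the main technical difficulty. The aim is: for \emph{every} countable $D=\{q_n\}$ with each $q_n\ne p$, to find a single $A\in p$ with $A\notin q_n$ for all $n$. Rather than read $A$ off from $D$ (impossible by the counting obstacle), I would derive the separation from the fixed combinatorics of the independent linked family into which $p$ was built, so that one uniform structure defeats all $2^{\cont}$ countable sets at once --- while taking care never to strengthen $p$ into a genuine P-point. Once this lemma is in place, the reformulation in the first paragraph shows at once that $p$ is a weak P-point.
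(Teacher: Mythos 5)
You should first note that the paper does not prove this statement at all: it is quoted as background and imported wholesale from \cite{Kunen78}, so the only meaningful comparison is with Kunen's original argument. At the architectural level your sketch mirrors it faithfully: the reformulation of ``$p$ is a limit of $D=\{q_n\}$'' via basic clopen neighbourhoods ($p\in\cl{D}$ iff every $A\in p$ lies in some $q_n$) is correct, the counting obstruction ($2^\cont$ countable subsets of $\omega^*$ against only $\cont$ recursion steps) is the right thing to worry about, an independent family of size $\cont$ consumed by a transfinite recursion is indeed the engine, and you are right that the construction must dodge producing pseudo-intersections, since P-points need not exist in ZFC.

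Nevertheless there is a genuine gap: the ``separation lemma'' you defer \emph{is} the theorem, and your sketch lacks the two ideas that make it provable. First, the reduction of $2^\cont$ threats to $\cont$ tasks is not achieved by vague uniformity but by a concrete change of currency: choosing witnesses $B_n\in p\setminus q_n$ replaces the countable set $\{q_n\}\subseteq\omega^*$ by a countable sequence of subsets of $\omega$, and there are only $\cont^{\aleph_0}=\cont$ such sequences, so the recursion \emph{can} enumerate the tasks after all; the issue is only that $2^\cont$ many different $D$'s share each witness sequence. Second, and fatally for the proposal as written, no \emph{single} responding set $A\in p$ can serve one witness sequence: if $A\notin q$ for every ultrafilter $q$ with $B_n\notin q$, then every free ultrafilter containing $A$ contains $B_n$, i.e.\ $A\subseteq^* B_n$ for all $n$ --- a pseudo-intersection, which is exactly the P-point trap you name but never escape. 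Kunen's escape is the OK-point condition: for each witness sequence (which one may take decreasing) the recursion produces \emph{uncountably many} sets $A_\alpha\in p$ such that any $n$ of them intersect inside $B_n$. Then if every $A_\alpha$ belonged to some $q_n$, a pigeonhole over $\aleph_1$ indices yields $n^*$ with uncountably many $A_\alpha\in q_{n^*}$; intersecting $n^*$ of those forces $B_{n^*}\in q_{n^*}$, contradicting the choice of $B_{n^*}$. The independent \emph{linked} family (whose definition you never supply) exists precisely so that such families $\{A_\alpha\}$ can keep being extracted, and the preservation lemma showing the unused part stays independent linked modulo the growing filter through all $\cont$ stages is the remaining technical load. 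As it stands, your text is a correct map of where the proof lives, not a proof.
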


Since it obviously contains non weak P-points, this is an ``honest'' proof of nonhomogeneity.
In \cite{Mill82}, J. van Mill had exploited the techniques of K. Kunen to prove, in ZFC, the existence of sixteen distinct topological types in $\omega^*$!
One of the types he introduced is given in the following theorem:

\begin{theorem}[van Mill] There is a point $p\in\omega^*$ which is a limit point of a countable discrete set and the countable sets whose limit point it is
form a filter.
\end{theorem}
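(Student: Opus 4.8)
The plan is to exhibit $p$ as a limit of a single countable discrete set $D$ and to arrange that $p$ cannot be reached by any countable set avoiding $D$; the filter property then reduces to a statement about an ultrafilter on the index set of $D$. First I would fix a countable relatively discrete set $D=\{d_n:n\in\omega\}\subseteq\omega^*$. Since such a set is $C^*$-embedded in $\omega^*$, its closure $\overline{D}$ is homeomorphic to $\beta\omega$ by a homeomorphism carrying $D$ onto $\omega$; in particular $Y:=\overline{D}\setminus D$ is a copy of $\omega^*$ and every point of $Y$ is a limit point of $D$. Under this identification a point $p\in Y$ corresponds to a free ultrafilter $u$ on $\omega$, and for $S\subseteq\omega$ one has $p\in\overline{\{d_n:n\in S\}}$ iff $S\in u$. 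The construction will be designed so that the following separation property holds:
\begin{equation*}
(\star)\qquad p\text{ is not a limit point of any countable }C\subseteq\omega^*\setminus(D\cup\{p\}).
\end{equation*}

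Next, assuming $p\in\overline{D}\setminus D$ and $(\star)$, I would derive the theorem. Let $\F$ be the family of countable $A\subseteq\omega^*$ with $p\in\overline{A\setminus\{p\}}$. It is clearly closed under countable supersets and does not contain the empty set, so only closure under intersection needs argument. Given $A\in\F$, write $A\setminus\{p\}=(A\cap D)\cup C$ with $C=(A\setminus\{p\})\setminus D$; since closure commutes with finite unions and $p\notin\overline{C}$ by $(\star)$, we get $p\in\overline{A\cap D}$, so that $S_A:=\{n:d_n\in A\}\in u$. Conversely any countable $A$ with $S_A\in u$ satisfies $p\in\overline{\{d_n:n\in S_A\}}\subseteq\overline{A\setminus\{p\}}$, hence lies in $\F$. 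Thus for $A,B\in\F$ the index set of $(A\cap B)\cap D$ equals $S_A\cap S_B\in u$, whence $p\in\overline{(A\cap B)\cap D}\subseteq\overline{(A\cap B)\setminus\{p\}}$ and $A\cap B\in\F$. Therefore $\F$ is a filter, while $D\in\F$ witnesses that $p$ is a limit point of a countable discrete set.

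Finally, the real work is to produce $D$ and a point $p\in Y=\overline{D}\setminus D$ satisfying $(\star)$. I would split $(\star)$ into two halves: (i) $p$ is not a limit of any countable subset of $Y\setminus\{p\}$, i.e.\ $p$ is a weak P-point of the copy $Y\cong\omega^*$; and (ii) $p$ is not a limit of any countable $C\subseteq\omega^*\setminus\overline{D}$, i.e.\ $Y$ behaves at $p$ like a weak P-set of the ambient $\omega^*$. Since any countable $C$ as in $(\star)$ decomposes as $(C\cap Y)\cup(C\setminus\overline{D})$, halves (i) and (ii) together yield $(\star)$. Half (i) is Kunen's theorem applied inside $Y$.

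The hard part, and the main obstacle, is to place $D$ so that (ii) holds simultaneously with (i): one must realize a copy of $\omega^*$ as the remainder $\overline{D}\setminus D$ that is a weak P-set of the surrounding $\omega^*$ and still carries a weak P-point. This is where I expect to need the full strength of Kunen's method rather than the bare statement quoted above — an independent linked matrix, or $\kappa$-OK-point, argument producing in a single construction both the weak-P-set copy $Y$ and a distinguished weak P-point $p$ of it. This combined construction is essentially van Mill's refinement of Kunen's technique, and controlling the ``external'' countable sets (to secure (ii)) is the delicate step.
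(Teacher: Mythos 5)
Your reduction is sound, and it is in substance the same reduction the paper makes: identifying $\overline{D}$ with $\beta\omega$, your computation that $A\in\F$ iff the trace $S_A=\{n:d_n\in A\}$ lies in the ultrafilter $u$ is exactly the reason why a weak P-point $p\in\omega^*\subseteq\beta\omega$ already has the desired property \emph{inside} $\beta\omega$, and your condition $(\star)$, split into halves (i) and (ii), is precisely the statement that $p$ is a weak P-point of the copy $Y\cong\omega^*$ and that the copy of $\beta\omega$ behaves at $p$ like a weak P-set of the ambient $\omega^*$. Up to this point the proposal is correct and in fact spells out details the paper's proof sketch leaves implicit.

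The gap is that half (ii) is never proved: you correctly flag it as the main obstacle, but then only gesture at a hoped-for simultaneous Kunen-style construction (independent linked matrices, OK-points) that would produce $D$, $Y$ and $p$ at once, without carrying it out. Nothing in your setup constrains $D$ --- an arbitrary countable relatively discrete $D\subseteq\omega^*$ need not have $\overline{D}$ anything like a weak P-set --- so the existential content of the theorem is exactly the part left open. The paper fills precisely this hole by quoting Simon's theorem (Theorem \ref{embed}, \cite{Sim85}): the \v{C}ech--Stone compactification of any $T_3$ ED space of weight $\leq\cont$ embeds into $\omega^*$ as a \emph{closed weak P-set}. Applied to the discrete space $\omega$, this realizes your $\overline{D}$ as a weak P-set of $\omega^*$, so (ii) holds automatically at \emph{every} point of $Y$, not just at one delicately chosen point; consequently (i) and (ii) decouple, and no simultaneous construction is needed. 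One first takes Kunen's weak P-point $p\in\omega^*$ in the abstract $\beta\omega$ (\cite{Kunen78}), then embeds $\beta\omega$ via Simon's theorem; since the embedding is a homeomorphism onto its image and the image is a weak P-set, both halves of $(\star)$ transfer to the image of $p$. So your architecture is right, but to complete the proof you should replace the projected matrix construction with an appeal to Simon's embedding theorem --- as written, the argument is incomplete at exactly the step you yourself identify as delicate.
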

\begin{proof}(Idea) Use Kunen's result to construct a weak P-point $p\in\omega^*\subseteq\beta\omega$. Now use a theorem of P. Simon (see theorem \ref{embed}) 
to embed $\beta\omega$ into $\omega^*$ as a weak P-set. Then the image of $p$ via the embedding will be as required, since $p$ clearly has the property in 
$\beta\omega$ and the embedding does not destroy it since the image of $\beta\omega$ is a weak P-set.
\end{proof}

This motivated P. Simon to define the following notion, which we have called a lonely point in \cite{Ver08}. We want essentially the same type of point as in
the above theorem only replacing the countable discrete set whose limit point it is by a crowded set:

\begin{definition} A point $p\in X$ is a \emph{lonely point} provided:
 \begin{itemize}
  \item[(i)] $p$ is $\omega$-discretely untouchable, i.e. not a limit point of a countable discrete set,
  \item[(ii)] $p$ is a limit point of a countable crowded (i.e. without isolated points) set and
  \item[(iii)] the countable sets whose limit point $p$ is form a filter.
 \end{itemize}
\end{definition}

In the paper we were able to show that lonely points exist in some open dense subspace of $\omega^*$.
Here we prove that they actually exist in $\omega^*$:

\begin{theorem} $\omega^*$ contains a lonely point.
\end{theorem}

The idea is to construct a countable, perfectly disconnected space $X$ with an $\aleph_0$-bounded remainder and then embed it as a weak P-set into
$\omega^*$. Any point of $X$ will then be a lonely point of $\beta X$ and, since $\beta X$ will be a weak P-set in $\omega^*$, also a lonely point
of $\omega^*$.

\section{Basic definitions and theorems}

\begin{definition}[Kunen] $F\subseteq X$ is a weak P-set of $X$ if any countable $D\subseteq X$ disjoint from $F$ has closure disjoint from $F$.
\end{definition}

\begin{observation}\label{hereditarylonely} If $F\subseteq X$ is a weak P-set of $X$ and $x\in F$ is a lonely point of $F$ then it is also a lonely point of $X$.
\end{observation}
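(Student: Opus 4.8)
The plan is to reduce each clause in the definition of loneliness for $x$ in $X$ to the corresponding clause for $x$ in $F$, using the weak P-set hypothesis to discard the part of any countable set that lies outside $F$. The single fact doing all the work is the following reduction statement: for every countable $D\subseteq X$,
\[ x\in\overline{D\setminus\{x\}}^{\,X}\iff x\in\overline{(D\cap F)\setminus\{x\}}^{\,F}, \]
where the superscripts indicate closure taken in $X$ and in $F$ respectively. Granting this, the three conditions transfer almost mechanically.

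To prove the reduction, I would split $D\setminus\{x\}=\big((D\cap F)\setminus\{x\}\big)\cup(D\setminus F)$. The set $D\setminus F$ is countable and disjoint from $F$, so by the weak P-set property its $X$-closure misses $F$, hence misses $x$. Therefore $x\in\overline{D\setminus\{x\}}^{\,X}$ exactly when $x\in\overline{(D\cap F)\setminus\{x\}}^{\,X}$. Since $(D\cap F)\setminus\{x\}\subseteq F$, since $x\in F$, and since $F$ carries the subspace topology, the latter is the same as $x\in\overline{(D\cap F)\setminus\{x\}}^{\,X}\cap F=\overline{(D\cap F)\setminus\{x\}}^{\,F}$, which gives the equivalence.

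Now I verify the three clauses. For (ii), take a countable crowded $C\subseteq F$ with $x$ a limit point of $C$ in $F$; because the subspace topology on $C$ inherited from $F$ agrees with the one inherited from $X$, the set $C$ is still crowded in $X$, and $\overline{C\setminus\{x\}}^{\,F}\subseteq\overline{C\setminus\{x\}}^{\,X}$ shows $x$ is a limit point of $C$ in $X$ (this direction needs only monotonicity of closures, not the reduction). For (i), if $x$ were a limit point in $X$ of a countable discrete set $D$, then $x\notin D$ and the reduction would make $x$ a limit point in $F$ of $D\cap F$, a countable set that is discrete as a subspace of the discrete set $D$, contradicting clause (i) for $x$ in $F$. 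For (iii), given countable $A,B\subseteq X$ with $x$ a limit point of each in $X$, the reduction yields that $x$ is a limit point in $F$ of $A\cap F$ and of $B\cap F$; the filter property in $F$ makes $x$ a limit point in $F$ of $(A\cap F)\cap(B\cap F)=(A\cap B)\cap F$, and the reduction applied to $D=A\cap B$ returns $x$ as a limit point in $X$ of $A\cap B$. Upward closure and nonemptiness of the family being immediate, the countable sets in $X$ with $x$ as a limit point again form a filter.

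The only genuinely delicate point is the reduction statement, and within it the lone appeal to the weak P-set property; everything afterwards is bookkeeping about transitivity of subspace topologies and the fact that discreteness and crowdedness are intrinsic to a set. I would phrase limit points throughout via the deleted closure $\overline{(\cdot)\setminus\{x\}}$, so that the set witnessing a putative failure of (i) remains genuinely discrete after deleting $x$ and intersecting with $F$.
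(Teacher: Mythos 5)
Your proposal is correct, and since the paper states this as an \emph{Observation} without any proof at all, your write-up simply supplies the intended argument: the weak P-set hypothesis is used exactly once, to show that for countable $D\subseteq X$ the deleted closure $\overline{D\setminus\{x\}}$ catches $x$ iff the deleted closure of $D\cap F$ in $F$ does, after which clauses (i)--(iii) transfer routinely. All the details check out (in particular your parenthetical that a limit point of a discrete set cannot belong to it, which keeps $(D\cap F)\setminus\{x\}=D\cap F$ in clause (i)), so nothing needs to be added.
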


\begin{definition} A space $X$ is \emph{extremally disconnected} (or ED for short) if the closure of any open set is open.
\end{definition}

The following is standard, see e.g. \cite{Engelking}:

\begin{theorem} If $X$ is ED then so is $\beta X$
\end{theorem}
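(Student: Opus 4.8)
The plan is to verify the definition directly: given an arbitrary open set $U\subseteq\beta X$, I will show that its closure $\mathrm{cl}_{\beta X}(U)$ is open (in fact clopen). The two facts I will lean on are that $X$ is dense in $\beta X$ and that every continuous map from $X$ into a compact Hausdorff space extends over $\beta X$ (the universal property of the Stone--Čech compactification). Throughout I write $\mathrm{cl}_X$ and $\mathrm{cl}_{\beta X}$ to keep the two ambient spaces apart.

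First I would reduce the problem to the trace of $U$ on the dense subspace $X$. Put $U_0=U\cap X$. Since $X$ is dense and $U$ is open, every point of $U$ lies in the closure of $U_0$ (any neighbourhood $W$ of such a point has $W\cap U$ open and nonempty, so $W\cap U$ meets $X$, i.e. meets $U_0$), whence $\mathrm{cl}_{\beta X}(U)=\mathrm{cl}_{\beta X}(U_0)$. Thus it suffices to show that $\mathrm{cl}_{\beta X}(U_0)$ is open, for the open set $U_0\subseteq X$. Because $X$ is ED, the set $A:=\mathrm{cl}_X(U_0)$ is clopen in $X$, so its characteristic function $\chi_A\colon X\to\{0,1\}$ is continuous.

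Next I would push $\chi_A$ up to $\beta X$. Viewing $\{0,1\}\subseteq[0,1]$, extend $\chi_A$ to a continuous $f\colon\beta X\to[0,1]$. Since $f$ takes only the values $0,1$ on the dense set $X$, continuity forces $f(\beta X)=f(\mathrm{cl}_{\beta X}(X))\subseteq\mathrm{cl}(f(X))=\{0,1\}$; hence $\widetilde A:=f^{-1}(1)$ is clopen in $\beta X$ and $\widetilde A\cap X=A$. Now I claim $\mathrm{cl}_{\beta X}(U_0)=\widetilde A$. On one hand $U_0\subseteq A\subseteq\widetilde A$ with $\widetilde A$ closed, giving $\mathrm{cl}_{\beta X}(U_0)\subseteq\widetilde A$. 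On the other hand, applying the density reduction of the previous paragraph to the open set $\widetilde A$ (which, being clopen, equals its own $\beta X$-closure) yields $\widetilde A=\mathrm{cl}_{\beta X}(\widetilde A\cap X)=\mathrm{cl}_{\beta X}(A)=\mathrm{cl}_{\beta X}(\mathrm{cl}_X(U_0))=\mathrm{cl}_{\beta X}(U_0)$. Therefore $\mathrm{cl}_{\beta X}(U)=\mathrm{cl}_{\beta X}(U_0)=\widetilde A$ is clopen, and $\beta X$ is ED.

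The only genuinely delicate points are bookkeeping ones: keeping the closures in $X$ and in $\beta X$ straight, and justifying that the extension $f$ is two-valued --- which is exactly what converts the continuous extension into a \emph{clopen} set, and is where the compactness and extension property of $\beta X$ do the real work. Everything else is the density argument applied twice.
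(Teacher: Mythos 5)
Your proof is correct, and every step checks out: the reduction $\mathrm{cl}_{\beta X}(U)=\mathrm{cl}_{\beta X}(U\cap X)$ via density, the clopenness of $A=\mathrm{cl}_X(U\cap X)$ from ED-ness of $X$, and the two-valuedness of the extension $f$ (which is just the statement that the completely separated sets $A$ and $X\setminus A$ have disjoint clopen closures partitioning $\beta X$) are all sound. The paper offers no proof of its own here---it records the theorem as standard with a citation to Engelking---and your argument is precisely the standard one delegated to that reference, so there is nothing to compare beyond noting that you have supplied, completely and correctly, the proof the paper omits.
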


We shall also need the following theorem of P. Simon (see \cite{Sim85}):

\begin{theorem}[Simon]\label{embed} The \v{C}ech-Stone compactification of any $T_3$ ED space of weight $\leq\cont$ can be embedded into $\omega^*$ as a closed weak $P$-set.
\end{theorem}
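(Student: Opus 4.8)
The plan is to pass to Stone duality and split the task into an algebraic ``realization'' part, which produces the embedding, and a combinatorial ``weak $P$-set'' part, which is the real obstacle. Since $X$ is $T_3$ and ED, $\beta X$ is ED and compact, so its algebra of clopen sets $\B=\mathrm{CO}(\beta X)=\mathrm{RO}(X)$ is a \emph{complete} Boolean algebra, and the weight hypothesis says exactly that $\B$ has a dense subset $\mathcal D=\{d_\xi:\xi<\cont\}$ of size $\le\cont$. Writing $\omega^*=\mathrm{St}(\P(\omega)/\mathrm{fin})$ and $\beta X=\mathrm{St}(\B)$, Stone duality turns ``$\beta X$ is a closed subspace of $\omega^*$'' into ``there is a surjective Boolean homomorphism $q\colon\P(\omega)/\mathrm{fin}\twoheadrightarrow\B$'': the dual map $\mathrm{St}(q)$ is then a continuous injection of the compact space $\beta X$ into the Hausdorff space $\omega^*$, hence automatically a homeomorphism onto a closed image. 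So the whole statement reduces to producing \emph{one particular} such $q$ for which the image $Y=\mathrm{St}(q)[\beta X]$ is in addition a weak $P$-set.

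For the realization of $\B$ I would use Kunen's independent-family machinery. Starting from a large independent family on $\omega$ as raw material, I would build by recursion on $\xi<\cont$ sets $B_\xi\subseteq\omega$ so that the assignment $d_\xi\mapsto[B_\xi]$ respects, modulo finite, all the Boolean relations holding among the $d_\xi$ in $\B$ and so that every element of $\B$ is eventually hit; completeness of $\B$ is precisely what lets one consistently assign representatives to the suprema needed to keep the map total and onto. The limit data then determine an ideal $J$ with $(\P(\omega)/\mathrm{fin})/J\cong\B$, i.e.\ the desired surjection $q$.

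The hard part is arranging that $Y$ is a weak $P$-set, i.e.\ that no countable $D\subseteq\omega^*\setminus Y$ has $\overline{D}\cap Y\ne\emptyset$. One cannot diagonalise against all countable subsets of $\omega^*$ directly, since there are $2^{\cont}$ of them while the recursion has length only $\cont$. The device, due to Kunen, is to make $Y$ an \emph{$\omega_1$-OK set}: for every decreasing sequence $\langle U_n:n<\omega\rangle$ of clopen sets containing $Y$ there are clopen $\{V_\alpha:\alpha<\omega_1\}\supseteq Y$ with $\bigcap_{\alpha\in F}V_\alpha\subseteq U_{|F|}$ for every nonempty finite $F$. A short argument shows such a set is a weak $P$-set: if $y\in\overline{D}\cap Y$ then each $V_\alpha$ is a neighbourhood of $y$ and so meets the countable $D$; by pigeonhole some single point of $D$ lies in uncountably many $V_\alpha$, and feeding a finite family $F$ of the corresponding indices with $|F|$ large into the interpolation forces that point into $U_{|F|}$, contradicting that the $U_n$ were chosen to exclude it. Crucially, the $\omega_1$-OK condition is tested only against the $|\B|^{\aleph_0}=\cont^{\aleph_0}=\cont$ many countable sequences of clopen sets, so it \emph{can} be secured during a length-$\cont$ recursion by reserving, at the stage treating a given sequence, fresh members of the independent family to serve as the $V_\alpha$.

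The principal obstacle is running these two recursions simultaneously without conflict: the independence and ``freshness'' that the $\omega_1$-OK requirement consumes must never clash with the finite-modulo relations that faithful realization of $\B$ imposes on the $B_\xi$. This is exactly where extremal disconnectedness earns its keep, since completeness of $\B$ supplies suprema of the already-placed generators and hence enough room to absorb the reserved independent sets while keeping both the homomorphism property and the surjectivity of $q$ intact. A standard pairing of $\cont$ with itself handles the bookkeeping, ensuring every generator $d_\xi$ is eventually placed and every countable sequence of clopen neighbourhoods of $Y$ is eventually interpolated; checking that the limiting $q$ is genuinely onto and that $Y$ is genuinely $\omega_1$-OK is the technical heart of the proof.
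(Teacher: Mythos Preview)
The paper does not prove this theorem at all: it is merely stated, attributed to P.~Simon, and accompanied by the citation \cite{Sim85}. There is therefore nothing in the paper to compare your argument to.

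That said, your outline is essentially the standard proof one finds in the literature (Kunen, Simon, van Mill): pass via Stone duality to the problem of exhibiting a surjection $\P(\omega)/\mathrm{fin}\twoheadrightarrow\mathrm{CO}(\beta X)$, use an independent family of size $\cont$ as raw material for a length-$\cont$ recursion, and interleave Kunen's $\omega_1$-OK requirements so that the resulting closed copy of $\beta X$ is a weak $P$-set. Your reduction ``$\omega_1$-OK $\Rightarrow$ weak $P$-set'' is correct, and you correctly identify completeness of $\mathrm{CO}(\beta X)$ (coming from extremal disconnectedness) as the ingredient that lets the realization and the OK-reservations coexist. One minor inaccuracy: the weight hypothesis gives $|\mathrm{CO}(\beta X)|\le\cont$ outright, not merely a dense subset of that size; this is what you actually need so that surjectivity can be arranged within a recursion of length $\cont$. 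Otherwise the sketch is sound and matches the approach in the cited source.
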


\section{Irresolvable spaces}

In this section, unless otherwise stated, we assume all spaces to be crowded (i.e. without isolated points). The following definitions
were introduced in \cite{vD93}:

\begin{definition}[van Douwen] A crowded space $X$ is \emph{perfectly disconnected} if no point of $X$ is a limit point of two disjoint subsets of $X$. It is irresolvable,
if it contains no disjoint dense sets. It is open-hereditarily-irresolvable (OHI for short), provided each open subspace is irresolvable.
A crowded space is \emph{maximal regular} if each finer topology either contains an isolated point or is not regular.
\end{definition}

Irresolvable spaces were constructed by E. Hewitt (\cite{Hew43}) and independently by M. Katětov (\cite{Katetov47}). They were extensively studied in 
\cite{vD93} where the following theorems may be found:


\begin{theorem}[\cite{vD93},1.7,1.11]\label{maxreg} Maximal regular spaces are zerodimensional, ED and OHI.
\end{theorem}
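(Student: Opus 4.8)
The plan is to derive all three properties from a single mechanism. To contradict the maximality of the (crowded, regular) topology $\tau$ on $X$, I must exhibit a strictly finer topology that is still crowded and still regular. The difficulty in any such argument is that refining a regular topology — for instance, simply declaring a dense set to be open — typically destroys regularity. The device I would use throughout is to declare sets open in a way that makes them \emph{clopen}: if I can split $X$ into finitely many clopen pieces whose subspace topologies are unchanged, then the refined space is the topological sum of those pieces, and a finite topological sum of regular spaces is regular (and is crowded as soon as each piece is). This converts the slogan ``preserve regularity'' into the triviality ``subspaces of regular spaces are regular.''

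For extremal disconnectedness I would argue by contradiction: suppose some open $U$ has non-open closure $\overline{U}$. Declaring $\overline{U}$ open makes it clopen (it is already closed), so the refinement is the sum $\overline{U}\oplus(X\setminus\overline{U})$ with the original subspace topologies. Both summands are regular, being subspaces of $X$, and both are crowded — points of $U$ are non-isolated because $X$ is crowded, points of $\overline{U}\setminus U$ are non-isolated because they are limits of $U$, and $X\setminus\overline{U}$ is open in a crowded space — while $\overline{U}$ is open in the refinement but not in $\tau$, so the refinement is strictly finer. This contradicts maximality, so every open set has open closure, i.e. $X$ is ED. Zero-dimensionality then comes for free: regularity makes $X$ semiregular, so the regular-open sets form a base, and in an ED space each regular-open $W$ satisfies $W=\overline{W}$ and is therefore clopen; hence the clopen sets form a base.

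For open-hereditary irresolvability I would again contradict maximality. If some open subspace is resolvable then, using zero-dimensionality, I can shrink to a \emph{clopen} $W$ carrying a partition $W=D\cup E$ into two sets each dense in $W$. Now I declare both $D$ and $E$ open; since they partition the clopen set $W$, each becomes clopen, and the refinement decomposes $X$ into the three clopen pieces $D$, $E$ and $X\setminus W$ with their old subspace topologies — again a finite sum of regular spaces. Each of $D$ and $E$ is crowded as a subspace: a point isolated in $D$ would have a punctured neighbourhood contained in $E$, forcing $E$ to have nonempty interior and contradicting the density of $D$ in $W$ (and symmetrically for $E$); and $X\setminus W$ is open, hence crowded. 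The refinement is thus regular and crowded, and it is strictly finer because $D$ is not $\tau$-open, being dense and codense in $W$. This contradicts maximality, so no open subspace is resolvable.

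I expect the only genuine work to lie in the bookkeeping I have glossed over: verifying in each case that no isolated points are created (the density arguments establishing crowdedness of the summands) and that the topology generated by $\tau$ together with the declared sets is exactly the claimed clopen sum rather than something strictly larger. The conceptual obstacle — keeping the refinement regular — is precisely what the ``make it clopen'' device removes, after which each of ED, OHI, and (through ED) zero-dimensionality follows from the same one-line contradiction with maximality.
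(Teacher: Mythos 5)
Your proof is correct, but there is nothing in the paper to compare it against line by line: Theorem \ref{maxreg} is quoted from van Douwen (\cite{vD93}, 1.7 and 1.11) without proof, so your argument is necessarily an independent route --- and in fact the ``make the new sets clopen'' device you use is essentially van Douwen's own technique: refine by a simple extension that splits $X$ into finitely many clopen pieces carrying their old subspace topologies, so that regularity survives because a finite sum of subspaces of a regular space is regular. Your organization is sound: ED by adjoining $\overline{U}$ (with crowdedness of the summand $\overline{U}$ correctly traced to $U$ being open-crowded and dense in it); zero-dimensionality from ED plus semiregularity of regular spaces, where the computation $W=\mathrm{int}\,\overline{W}=\overline{W}$ for regular-open $W$ is exactly right; and OHI by adjoining both halves of a dense--codense partition $W=D\cup E$ of a clopen $W$, with strict fineness witnessed by $D$ being codense in $W$. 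Three pieces of the bookkeeping you flagged deserve an explicit line each: (i) the topology generated by $\tau\cup\{\overline{U}\}$ (resp.\ $\tau\cup\{D,E\}$) really is the claimed sum, which follows from the trace computation that its basic open sets are exactly $(V_1\cap\overline{U})\cup V_2$ with $V_2\subseteq X\setminus\overline{U}$ (resp.\ $(V_1\cap D)\cup(V_2\cap E)\cup V_3$ with $V_3\subseteq X\setminus W$), the adjoined sets being closed in the refinement by complementarity inside a clopen set; (ii) in the OHI step, from $V\cap D=\{x\}$ you only get $V\setminus\{x\}\subseteq E\cup(X\setminus W)$, so you must first shrink $V$ inside the open set $W$ before concluding the punctured neighbourhood lies in $E$; and (iii) the claim that the punctured neighbourhood is \emph{open} (and hence that $E$ has nonempty interior, contradicting density of $D$) uses $T_1$ --- harmless here, since in \cite{vD93}, and in this paper's application where the topology refines a Hausdorff zerodimensional one, ``regular'' means $T_3$, but it is the one separation axiom your argument consumes and should be stated. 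With those lines added, your proof is complete and self-contained, which is arguably a gain over the paper's bare citation.
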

\begin{theorem}[\cite{vD93},1.4,1.6]\label{ultradisc} If $A,B$ are disjoint crowded subspaces of a maximal regular space, then $\cl{A}$ and $\cl{B}$ are disjoint.
\end{theorem}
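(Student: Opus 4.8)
The plan is to argue by contradiction: assume some $p\in\overline{A}\cap\overline{B}$ and derive a contradiction, using the three properties of a maximal regular space supplied by Theorem \ref{maxreg} (zerodimensionality, ED, OHI) together with the defining maximality. Throughout, write $U=\intr\overline{A}$ and $V=\intr\overline{B}$. Since $A$ is dense in $\overline{A}$ it is dense in the open set $U$, and likewise $B$ is dense in $V$. The aim is to establish the three equalities $\overline{A}=\overline{U}$, $\overline{B}=\overline{V}$ and $\overline{U}\cap\overline{V}=\emptyset$; together these give $\overline{A}\cap\overline{B}=\emptyset$, contradicting the existence of $p$.

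First I would use OHI to show that $F:=\overline{A}\cap\overline{B}$ is nowhere dense. Indeed, if $G:=\intr F$ were nonempty, then $A\cap G$ and $B\cap G$ would be disjoint subsets of the open set $G$, each dense in $G$ (because $G\subseteq\overline{A}$ and $A$ is dense in $\overline{A}$, and symmetrically for $B$). Thus $G$ would be a resolvable open subspace, contradicting OHI. Hence $\intr F=\emptyset$. In particular $U\cap V$ is an open subset of $F$ and is therefore empty, so $U$ and $V$ are disjoint open sets; since the space is ED, disjoint open sets have disjoint closures, which gives $\overline{U}\cap\overline{V}=\emptyset$.

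It remains to prove $\overline{A}=\overline{U}$, and symmetrically $\overline{B}=\overline{V}$; equivalently, that the set $N:=\overline{A}\setminus\overline{U}$ is empty. Now $N=\overline{A}\cap(X\setminus\overline{U})$ is the intersection of the crowded set $\overline{A}$ with an open set, hence is itself crowded, and a short computation shows that $N$ is nowhere dense in $X$. So the whole theorem reduces to the assertion $(\ast)$: a maximal regular space contains no nonempty crowded nowhere dense subset. I expect $(\ast)$ to be the main obstacle, and the point where the full force of maximality is needed rather than merely OHI and ED: given a nonempty crowded nowhere dense $N$, one would refine the topology of $X$ so as to open up or isolate $N$ while keeping the space crowded and regular, contradicting the maximality of the topology. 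This \emph{nodec}-type property is exactly the sort of fact established in van Douwen's 1.4--1.6.

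Granting $(\ast)$, we get $N=\emptyset$, so $\overline{A}=\overline{U}$ and $\overline{B}=\overline{V}$, whence $\overline{A}\cap\overline{B}=\overline{U}\cap\overline{V}=\emptyset$ by the second paragraph. This contradicts $p\in\overline{A}\cap\overline{B}$ and completes the argument.
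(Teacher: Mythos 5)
First, a point of reference: the paper contains no proof of this statement at all --- it is imported verbatim from van Douwen (\cite{vD93}, 1.4 and 1.6) --- so your argument has to stand on its own. Your reductions are all correct: OHI does make $F=\overline{A}\cap\overline{B}$ nowhere dense (the density of $A\cap G$ and $B\cap G$ in an open $G\subseteq\intr F$ is right), ED does give $\overline{U}\cap\overline{V}=\emptyset$ from $U\cap V=\emptyset$, and $N=\overline{A}\setminus\overline{U}$ is indeed crowded (it is relatively open in the crowded set $\overline{A}$) and nowhere dense (it sits inside $\overline{A}\setminus U$, the boundary of a closed set). But there is a genuine hole at exactly the place you flag: $(\ast)$ is asserted, not proved, and $(\ast)$ is not a lemma easier than the theorem --- it is essentially equivalent to it. Indeed, granted the theorem, $(\ast)$ follows in one line: if $N$ is nonempty, crowded and nowhere dense, apply the theorem to the disjoint crowded sets $N$ and $X\setminus\overline{N}$; the latter is dense (as $N$ is nowhere dense), so its closure is $X$ and meets $\overline{N}$, a contradiction. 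So what you have established is an equivalence between the statement to be proved and $(\ast)$, with all of the content --- the only point where maximality of the topology enters --- left out.

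The good news is that your hint for $(\ast)$ (``open up $N$'', not ``isolate'' it) completes in a few lines, and this is where the real proof lives. Given nonempty crowded nowhere dense $N$, let $\sigma$ be the topology generated by $\tau\cup\{\overline{N}\}$, with basic open sets $W$ and $W\cap\overline{N}$ for $W\in\tau$. Since $N$ is crowded so is $\overline{N}$, hence no point becomes isolated: $\{x\}=W\cap\overline{N}$ would make $x$ a $\tau$-isolated point of the subspace $\overline{N}$. Regularity survives: $T_1$ is preserved under refinement; for $x\in W$ use $\tau$-regularity directly ($\tau$-closed sets stay $\sigma$-closed), and for $x\in W\cap\overline{N}$ pick $\tau$-open $W'$ with $x\in W'\subseteq\overline{W'}\subseteq W$, so that $\overline{W'}\cap\overline{N}$ is a $\sigma$-closed $\sigma$-neighbourhood of $x$ inside $W\cap\overline{N}$ (here one uses that $\overline{N}$ is $\sigma$-clopen). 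Finally $\sigma\neq\tau$, because $\intr\overline{N}=\emptyset\neq\overline{N}$ means $\overline{N}$ is not $\tau$-open. This contradicts maximal regularity. With this inserted your proof is complete. Note also that the same refinement argument shows directly that the closure of \emph{any} crowded set is clopen in a maximal regular space, which makes your whole first reduction unnecessary: $F=\overline{A}\cap\overline{B}$ is then clopen and contains the disjoint dense subsets $A\cap F$ and $B\cap F$, so OHI forces $F=\emptyset$ at once --- essentially van Douwen's own route.
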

\begin{theorem}[\cite{vD93},2.2]\label{perfdisc_char} If $X$ is ED and OHI and each nowhere dense subset of $X$ is closed then $X$ is perfectly disconnected.
\end{theorem}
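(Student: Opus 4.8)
The plan is to argue by contradiction. Suppose some point $p$ is a limit point of two disjoint sets $A$ and $B$; after deleting $p$ from each I may assume $p\notin A\cup B$ while keeping $A\cap B=\emptyset$ and $p\in\overline{A}\cap\overline{B}$. The first move is to use the hypothesis that nowhere dense sets are closed in order to discard the ``boundary part'' of $A$ and $B$. Writing $U=\operatorname{int}\overline{A}$, the set $A\setminus U$ lies in $\overline{A}\setminus\operatorname{int}\overline{A}$, which is the boundary of the closed set $\overline{A}$ and hence nowhere dense; by hypothesis it is therefore closed. Since $p\notin A$, it lies outside this closed set, so from $p\in\overline{A}=\overline{A\cap U}\cup(A\setminus U)$ I conclude $p\in\overline{A\cap U}$. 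Thus $p$ remains a limit point of the trimmed set $A':=A\cap U$, and symmetrically of $B':=B\cap V$, where $V=\operatorname{int}\overline{B}$.

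The second step is to observe that each trimmed set is dense in the corresponding open set: because $A$ is dense in $\overline{A}\supseteq U$, every nonempty open subset of $U$ meets $A$, so $A'=A\cap U$ is dense in $U$, and likewise $B'$ is dense in $V$. I now have disjoint sets $A'\subseteq U$ and $B'\subseteq V$, dense in the respective open sets, with $p\in\overline{A'}\subseteq\overline{U}$ and $p\in\overline{B'}\subseteq\overline{V}$.

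The argument then splits according to whether $U\cap V$ is empty. If $U\cap V\neq\emptyset$, then, since density passes to nonempty open subsets, both $A'\cap(U\cap V)$ and $B'\cap(U\cap V)$ are dense in the nonempty open subspace $U\cap V$; being disjoint, they resolve $U\cap V$, contradicting OHI. If instead $U\cap V=\emptyset$, then $U$ and $V$ are disjoint open sets, and here I would invoke the standard fact that in an ED space disjoint open sets have disjoint closures (if $W_1,W_2$ are open and disjoint then $\overline{W_1}$ is clopen and disjoint from $W_2$, hence disjoint from $\overline{W_2}$); this gives $\overline{U}\cap\overline{V}=\emptyset$, contradicting $p\in\overline{U}\cap\overline{V}$. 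Either way I reach a contradiction, so no point is a limit point of two disjoint sets and $X$ is perfectly disconnected.

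I expect the reduction in the first step to be the crux. Without the hypothesis that nowhere dense sets are closed there is no reason for $p$ to survive as a limit point of the ``interior part'' $A\cap\operatorname{int}\overline{A}$, and it is precisely this that lets me replace the arbitrary sets $A,B$ by sets dense in open sets, where ED and OHI can be brought to bear. The two uses of the hypotheses then divide cleanly: OHI handles the case where the regular open hulls overlap, and ED handles the case where they are disjoint.
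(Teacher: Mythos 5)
Your proof is correct. Note that there is nothing in the paper to compare it against: the statement is imported verbatim from van Douwen (\cite{vD93}, 2.2) and the paper gives no argument for it, so you have in effect supplied the missing proof, and it is the natural one. Each step checks out: after deleting $p$, the boundary part $A\setminus U\subseteq\overline{A}\setminus\operatorname{int}\overline{A}$ is indeed nowhere dense, hence closed by hypothesis, hence misses $p$, so $p\in\overline{A}=\overline{A\cap U}\cup(A\setminus U)$ forces $p\in\overline{A\cap U}$; the trimmed sets are dense in the open sets $U$, $V$; if $U\cap V\neq\emptyset$ the two disjoint traces resolve the open subspace $U\cap V$, contradicting OHI, and if $U\cap V=\emptyset$ extremal disconnectedness gives $\overline{U}\cap\overline{V}=\emptyset$, contradicting $p\in\overline{A\cap U}\cap\overline{B\cap V}$. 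Your division of labour --- the ``nowhere dense sets are closed'' hypothesis to pass from arbitrary sets to sets dense in their regular-open hulls, OHI for overlapping hulls, ED for disjoint hulls --- is exactly how the hypotheses must be spent, and the degenerate case $U=\emptyset$ (or $V=\emptyset$) is covered automatically by your first step, since it then yields $p\in\overline{\emptyset}$, i.e., $A$ itself is nowhere dense, hence closed, contradicting $p\in\overline{A}\setminus A$. One cosmetic remark: the definition of perfectly disconnected in the paper applies to crowded spaces, but crowdedness plays no role in your argument, which proves the implication as stated.
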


The following theorem is not explicitly stated in van Douwen's paper, but its proof is essentially given in his Lemma 3.2 and Example 3.3.

\begin{theorem}[van Douwen]\label{perfdisc} Any countable maximal regular space $X$ contains an open perfectly disconnected subspace.
\end{theorem}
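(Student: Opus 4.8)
The plan is to reduce the global statement to local combinatorics using the characterization theorems already collected. By Theorem \ref{maxreg}, a countable maximal regular space $X$ is automatically zerodimensional, ED and OHI. The remaining hypothesis needed to apply Theorem \ref{perfdisc_char} is that nowhere dense subsets be closed; since in general this need not hold globally, the strategy is to pass to a suitable open subspace on which it \emph{does} hold. So the first step is to understand the failure of ``every nowhere dense set is closed'' in $X$ and to localize it away.

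First I would examine the structure of nowhere dense sets in a countable maximal regular space. The key object should be the set of points where closedness of nowhere dense sets fails, or equivalently points that are limit points of some nowhere dense (hence, by maximality, ``small'') set to which they do not belong. Using Theorem \ref{ultradisc}, which says disjoint crowded subspaces have disjoint closures, I would argue that nowhere dense subsets are highly constrained: a nowhere dense set $N$ and its complement interact rigidly, and the set of ``bad'' limit points $B = \overline{N}\setminus N$ accumulated by nowhere dense sets cannot itself be dense in any open set without contradicting irresolvability of that open set. This is where van Douwen's Lemma 3.2 and Example 3.3 presumably do the real work: they should show that the bad points form a nowhere dense (indeed closed, or at least avoidable) set, so that its complement contains a nonempty open subspace $U$.

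The second step is to verify that such a $U$, with the subspace topology, is again maximal regular (or at least retains ED, OHI, and now the property that nowhere dense sets are closed). Openness is important here: an open subspace of an ED space is ED, an open subspace of an OHI space is OHI by definition, and nowhere dense sets in $U$ are nowhere dense in $X$, so if $U$ avoids all the bad limit points then every nowhere dense subset of $U$ is closed in $U$. With all three hypotheses of Theorem \ref{perfdisc_char} now satisfied on $U$, that theorem immediately yields that $U$ is perfectly disconnected, and $U$ is the desired open subspace of $X$.

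The hard part will be the first step: pinning down precisely why the set of bad points cannot be dense in any open set, and hence why an open perfectly disconnected piece survives. I expect this to rest essentially on combining irresolvability (via OHI) with Theorem \ref{ultradisc} to control how a nowhere dense set can accumulate, ruling out the pathological situation where every open set meets the closure-minus-itself of some nowhere dense set. Everything after that — checking $U$ inherits ED and OHI and the nowhere-dense-implies-closed property, and invoking Theorem \ref{perfdisc_char} — should be routine.
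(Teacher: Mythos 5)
Your overall architecture is exactly the paper's: isolate the set of ``bad'' points at which closedness of nowhere dense sets fails (the complement of the paper's set $\vartheta$), show that set is nowhere dense, take a nonempty open $U$ inside its complement, observe that $U$ is maximal regular (any open subspace of a maximal regular space is), hence ED and OHI by Theorem \ref{maxreg}, and that nowhere dense subsets of the open set $U$ are nowhere dense in $X$ and so closed in $U$; then Theorem \ref{perfdisc_char} finishes. Your second step is correct and, as you say, routine. The problem is that you explicitly defer the first step --- the only substantive one --- to ``van Douwen's Lemma 3.2 and Example 3.3 presumably do the real work,'' and your guess at the mechanism (irresolvability applied directly to $\cl{N}\setminus N$ for a nowhere dense $N$, with Theorem \ref{ultradisc} rigidifying how $N$ meets its complement) is not an argument and is not how the proof actually runs. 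That is a genuine gap: the statement you need, that the bad set is nowhere dense, is the theorem's entire content.

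Concretely, two ideas are missing. First, the paper works not with limit points of nowhere dense sets directly but with $A_Z$, the set of points of $Z$ that are limit points of \emph{relatively discrete} subsets of $Z$, and proves $A_Z\neq Z$ for every open $Z$ by a recursion: assuming $A_Z=Z$, one builds pairwise disjoint relatively discrete sets $D_n$ with $x_n\in\cl{D_n}$ and $\bigcup_{i<n}D_i\subseteq\cl{D_n}$, using regularity plus countability to get disjoint open expansions of $D_n$ and the assumption $A_Z=Z$ to plant a discrete set accumulating to each point of $D_n$ inside its expansion; then $\bigcup_n D_{2n}$ and $\bigcup_n D_{2n+1}$ are disjoint dense subsets of $Z$, contradicting irresolvability (OHI). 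From this, zerodimensionality gives $A_X$ empty interior and OHI upgrades this to $A_X$ nowhere dense. Second, Theorem \ref{ultradisc} enters only through a bridge claim you do not have: every nowhere dense $A$ contains a dense discrete subset (its isolated points $D$ form a discrete set, and if $E=A\setminus\cl{D}$ were nonempty it would be crowded with crowded complement, hence clopen by Theorem \ref{ultradisc}, contradicting nowhere density of $A$). This bridge reduces ``limit point of a nowhere dense set'' to ``limit point of a discrete set,'' placing the bad set inside the nowhere dense $A_X$, so that $\intr\vartheta\neq\emptyset$ serves as your $U$. Without the even/odd tower construction --- the actual use of irresolvability --- your proposal is a correct reduction together with an accurate pointer to the literature, not a proof.
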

\begin{proof} For each $Z\subseteq X$ let 
\begin{displaymath}
 A_Z=\{x\in Z:x\ \mbox{is a limit point of a relatively discrete subset of Z}\}
\end{displaymath}

{\bf Claim} $A_Z\neq Z$ for each open subset $Z$ of $X$.

Assume otherwise. Enumerate $Z$ as $\langle x_n:n<\omega\rangle$. By induction construct pairwise disjoint, relatively discrete sets $\langle D_n:n<\omega\rangle$
such that:
\begin{itemize}
 \item[(i)] $\bigcup_{i< n} D_i\subseteq \cl{D_n}$ for all $n<\omega$ and
 \item[(ii)] $x_n\in\cl{D_n}$ for $n<\omega$.
\end{itemize}
This will lead to a contradiction with the irresolvability of $Z$ (by theorem \ref{maxreg}, $X$ is OHI, so $Z$ is irresolvable) since $\bigcup_{n<\omega}D_{2n}$ and $\bigcup_{n<\omega}D_{2n+1}$ would then be disjoint 
dense subsets of $Z$. To see that the construction can be carried out let $D_0=\{x_0\}$ and assume we have constructed $D_i$ for $i\leq n$. 
Let $Y=D_n\cup Z\setminus\cl{D_n}$. Since $D_n$ is relatively discrete, $Y$ is open. Since $Z$ is regular and $D_n$ is countable and relatively discrete, 
there is a pairwise disjoint collection of open sets $\{U_x:x\in D_n\}$ such that $x\in U_x\subseteq Y$. Since we assumed $A_Z=Z$ we can choose for each $x\in D_n$
a relatively discrete set $D_x$ such that $D_x\subseteq U_x$ and $x\in\cl{D_x}\setminus D_x$. Let $D^\prime_{n+1}=\bigcup_{x\in D_n}D_x$. If $x_{n+1}$ is a limit
point of $D^\prime_{n+1}$ let $D_{n+1}=D^\prime_{n+1}$ otherwise let $D_{n+1}=D^\prime_{n+1}\cup\{x_{n+1}\}$. Then $D_{n+1}$ is as required.

{\bf Claim} $\intr A_X=\emptyset$.

For any clopen $U$, $A_X\cap U  = A_U$. Since $X$ is regular and countable, it is zerodimensional. Suppose $U$ is clopen and $U\subseteq A_X$. 
By the previous claim $U\setminus A_U\neq\emptyset$ but then $U\setminus A_X\neq\emptyset$ a contradiction.

{\bf Claim} $A_X$ is nowhere dense.

Take any open $U\subseteq X$. Then $U\setminus A_X$ is dense in $U$, since $\intr A_X=\emptyset$. Since
$X$ is OHI (by theorem \ref{maxreg}), $U$ is irresolvable so $A_X$ cannot be dense in $U$ so $U\not\subseteq\cl{A_X}$. Thus $\intr\cl{A_X}=\emptyset$.

{\bf Claim} If $A\subseteq X$ is nowhere dense then there is a discrete $D\subseteq A$ dense in $A$.

Let $D=\{x\in A:x\ \mbox{is isolated in}\ A\}$. Since $X$ is regular and countable $D$ is relatively discrete. Since $A$ is nowhere dense, 
$D$ is discrete. Let $E=A\setminus\cl{D}$. Then $E$ has no isolated points. Also $X\setminus E$ has no isolated points. By theorem \ref{ultradisc}
$E$ must be open which contradicts that $A$ is nowhere dense.

Let
\begin{displaymath}
 \vartheta=\{x\in X:x\ \mbox{is not a limit point of a nowhere dense subset of}\ X\}
\end{displaymath}

By the previous claim (and by the fact that each discrete subset of $X$ is nowhere dense)

\begin{displaymath}
\vartheta = \{x\in X:x\ \mbox{is not a limit point of a discrete set}\}
\end{displaymath}

Then $X\setminus\vartheta\subseteq A_X$ so $X\setminus\vartheta$ is nowhere dense, so $\intr{\vartheta}$ is nonempty. 
We finally show that $\intr{\vartheta}$ is perfectly disconnected. By the definition of $\vartheta$ any 
nowhere dense subset of $\intr{\vartheta}$ is closed. Now it remains to apply theorem \ref{perfdisc_char} remembering that by theorem \ref{maxreg}
$\intr{\vartheta}$ is ED and OHI (any open subspace of a maximal regular space is maximal regular).
\end{proof}

\section{Proof of the main theorem}

The following definition and theorem is taken from \cite{DGS88}:

\begin{definition} Let $p\in\omega^*$ be a weak P-point. The space ${\mathcal G}_\omega$ is the space $\omega^{<\omega}$ of all finite
sequences of natural numbers with $G\subseteq\omega^{<\omega}$ being open precisely when for each $\sigma\in G$ the set $\{n:\sigma\conc n\in G\}$ is
in $p$.
\end{definition}

\begin{theorem}[Dow, Gubbi, Szymanski]\label{bounded} The remainder of ${\mathcal G}_\omega$ is $\aleph_0$-bounded. Moreover ${\mathcal G}_\omega$ is a $T_2$, zerodimensional, ED space.
\end{theorem}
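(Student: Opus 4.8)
The plan is to derive all four assertions from one local description of the topology together with one closure formula; throughout, $p$ denotes the fixed free ultrafilter on $\omega$. First I would record the clopen criterion: a set $C\subseteq\omega^{<\omega}$ is clopen if and only if, for every $\sigma$, $\sigma\in C\iff\{n:\sigma\conc n\in C\}\in p$. This is immediate from applying the definition of openness to $C$ and to its complement, using that $p$ is an ultrafilter. It follows at once that each cone $C_\sigma=\{\tau:\sigma\subseteq\tau\}$ is clopen, which gives $T_2$ (disjoint cones for incomparable nodes, and $C_\tau$ together with its complement when $\sigma\subsetneq\tau$). For zero-dimensionality I would show the clopen sets form a base: given open $U\ni\sigma$, prune $U$ to the subtree rooted at $\sigma$ which at each retained node $\tau$ keeps exactly the successors in $B_\tau=\{n:\tau\conc n\in U\}\in p$ and discards every extension of the remaining successors; the clopen criterion is then verified node by node, and $\sigma\in C\subseteq U$.

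The engine for the last two properties is the closure formula
\[
\sigma\in\overline{S}\iff\sigma\in S\ \text{ or }\ \{n:\sigma\conc n\in\overline{S}\}\in p,
\]
valid for all $S\subseteq\omega^{<\omega}$. The implications from right to left are clear; for the converse I would use that a neighbourhood base at $\sigma$ is given by the ``tree systems'' in which every step past $\sigma$ is required to land in a prescribed member of $p$: assuming $\sigma\notin S$ and $\{n:\sigma\conc n\in\overline{S}\}\notin p$, glue neighbourhoods $U_n$ of $\sigma\conc n$ missing $S$ (available for the $p$-many $n$ with $\sigma\conc n\notin\overline{S}$) into the open set $\{\sigma\}\cup\bigcup_n U_n$, a neighbourhood of $\sigma$ missing $S$. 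Extremal disconnectedness is then immediate: for open $U$ the formula collapses to $\sigma\in\overline{U}\iff\{n:\sigma\conc n\in\overline{U}\}\in p$, since $\sigma\in U$ forces $\{n:\sigma\conc n\in U\}\in p$ and hence $\{n:\sigma\conc n\in\overline{U}\}\in p$. Thus $\overline{U}$ satisfies the clopen criterion and is in particular open.

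For the remainder $\beta\G\setminus\G$ --- which is a genuine noncompact space, since $\G$ is countable, crowded and therefore nowhere locally compact --- I would first reduce $\aleph_0$-boundedness to a separation statement. As the $\beta\G$-closure of any set is compact, a countable $D\subseteq\beta\G\setminus\G$ has compact closure in the remainder precisely when $\overline{D}^{\beta\G}\cap\G=\emptyset$. So it suffices to prove that no $\sigma\in\G$ lies in the $\beta\G$-closure of a countable $D=\{y_k:k<\omega\}\subseteq\beta\G\setminus\G$; identifying points of $\beta\G$ with ultrafilters on the clopen algebra of $\G$, I must produce a single clopen neighbourhood $C$ of $\sigma$ with $C\notin y_k$ for all $k$. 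Replacing $\G$ by the cone $C_\sigma$, which is again homeomorphic to $\G$, I may assume $\sigma$ is the root.

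The neighbourhood $C$ is to be built as a $p$-branching subtree by a fusion down the levels of $\omega^{<\omega}$, and this is exactly where the weak P-point property of $p$ is essential. At the root the successor cones $\{C_{\langle n\rangle}\}_{n}$ partition $\G$ below the root, and projecting each $y_k$ to the first coordinate yields some $q_k\in\omega^*$; since the root is the $p$-limit of its successors, any $y_k$ with $q_k\neq p$ is discarded by retaining only a $p$-set of children, and the weak P-point property supplies one set $A\in p$ defeating all such $q_k$ at once --- the step a plain ultrafilter, or even a P-point, could not perform for countably many constraints simultaneously. The heart of the matter, and the main obstacle, is the ``wide'' remainder points $y_k$ with $q_k=p$: these concentrate in no single child and so survive every choice made at one node. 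For them I would recurse, distributing the task of excluding each such $y_k$ across the retained children (each cone being another copy of $\G$) and invoking the weak P-point property anew at each retained node, under a bookkeeping that schedules $y_k$ to be killed from level $k$ on. What must then be checked --- the genuinely delicate part --- is that this fusion terminates for every $y_k$, i.e. that a true remainder point cannot stay ``$p$-wide'' along a $p$-large family of branches all the way to infinite depth without being captured by the subtree, and that the resulting $C$ is still a legitimate clopen neighbourhood of the root; both rest on invoking the weak P-point property coherently at the countably many nodes of the tree.
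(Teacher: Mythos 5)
The paper does not prove this theorem at all --- it is quoted from \cite{DGS88} --- so your attempt can only be judged on its own merits. The first half of your proposal is correct and complete in all essentials: the clopen criterion (a set $C$ is clopen iff $\sigma\in C\iff\{n:\sigma\conc n\in C\}\in p$ for all $\sigma$), the clopenness of cones and hence $T_2$, the pruned-subtree argument for a clopen base, the closure formula, and the derivation of extremal disconnectedness from it are all sound. Likewise your reduction of $\aleph_0$-boundedness of $\G^*$ to the statement that the $\beta\G$-closure of any countable $D\subseteq\G^*$ misses $\G$, the reduction to the root via homogeneity of cones, and the root step --- using that $p$ is a weak P-point to find a single $A\in p$ with $A\notin q_k$ for all the countably many nonprincipal projections $q_k\neq p$ --- are correct, and you correctly identify that a weak P-point (not merely an ultrafilter or a P-point) is what this step needs.

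The treatment of the ``wide'' points, however, is a genuine gap, and it is the heart of the theorem. If no successor cone $C_{\langle n\rangle}$ belongs to $y_k$, then $y_k$ induces no ultrafilter on any child cone: the trace of an ultrafilter on a clopen set whose complement it contains is degenerate. So ``distributing the task of excluding each such $y_k$ across the retained children'' is not a defined operation --- there is no object living in a child cone for the recursion to kill --- and one cannot repair this by writing a wide $y_k$ as a $p$-limit of points $z_n\in\cl{C_{\langle n\rangle}}^{\beta\G}$, since not every point of the fiber over $p$ admits such a representation. What freeness of $y_k$ actually gives is only that \emph{some} clopen tree $E_k$ containing the root has $E_k\notin y_k$; exclusion is monotone under shrinking, but countably many such $E_k$ cannot be simultaneously respected: the root is not a P-point of the countable crowded space $\G$, so $\bigcap_k E_k$ contains no clopen neighbourhood, and a scheduled fusion (intersecting traces of $E_0,\dots,E_k$ from level $k$ on) only confines $C$ inside $E_k$ above those level-$k$ nodes of $C$ that already lie in $E_k$, leaving entire $C$-cones outside $E_k$ on which $y_k$ is uncontrolled. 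Your own sentence ``a true remainder point cannot stay $p$-wide along a $p$-large family of branches all the way to infinite depth without being captured'' is precisely the assertion that has to be proved, and nothing in the proposal argues it; the weak P-point property as you deploy it acts only against projections different from $p$ and gives no purchase on a point that projects to $p$ at every node where it has a well-defined projection. (For a point whose ``trunk'' $\{\tau:C_\tau\in y_k\}$ has a top $\tau_k\neq\emptyset$ one can simply route the tree around the single node $\tau_k$, but a point wide at the root cannot be dodged, since $C$ must contain the root.) Thus the $\aleph_0$-boundedness claim --- the only part of the theorem the paper actually uses in an essential way --- remains unproven, and for it you should consult \cite{DGS88} directly.
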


Notice that if a space $X$ has an $\aleph_0$-bounded remainder, any finer topology also has an $\aleph_0$-bounded remainder:

\begin{proposition} If $(X,\tau)^*$ is a zerodimensional $\aleph_0$-bounded space and $\sigma\supseteq\tau$ is also zerodimensional, then $(X,\sigma)^*$ is $\aleph_0$-bounded.
\end{proposition}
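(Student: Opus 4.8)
The plan is to restate the property in terms of weak $P$-sets and then transport the hypothesis along the canonical map between the two compactifications. First I would record the elementary equivalence: for any space $Y$ the remainder $Y^*$ is $\aleph_0$-bounded exactly when $Y$ is a weak $P$-set of $\beta Y$. Indeed, for countable $D\subseteq Y^*$ the set $\mathrm{cl}_{\beta Y}(D)$ is already compact (closed in the compact space $\beta Y$), so the only issue is whether it stays inside $Y^*$, i.e. whether $\mathrm{cl}_{\beta Y}(D)\cap Y=\emptyset$, which is precisely the weak $P$-set condition. Thus the hypothesis reads: $X$ is a weak $P$-set of $\beta(X,\tau)$; and the goal is: $X$ is a weak $P$-set of $\beta(X,\sigma)$.

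The main tool is the extension $\beta i\colon\beta(X,\sigma)\to\beta(X,\tau)$ of the identity map $i\colon(X,\sigma)\to(X,\tau)$, which is continuous since $\sigma\supseteq\tau$ and restricts to the identity on $X$. I would first prove a compatibility lemma: for every $\tau$-clopen $C\subseteq X$ and every $d\in\beta(X,\sigma)$ one has $d\in\mathrm{cl}_{\beta(X,\sigma)}(C)$ if and only if $\beta i(d)\in\mathrm{cl}_{\beta(X,\tau)}(C)$. One direction is continuity; the other uses that $C$ and $X\setminus C$ are completely separated in both compactifications, so their closures are complementary clopen sets and $d$, $\beta i(d)$ must lie on the same side. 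Here I use that clopen subsets of $X$ have clopen closures in $\beta X$, which holds for zerodimensional $X$ (and in the intended application the spaces are extremally disconnected, so both compactifications are themselves zerodimensional).

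Now fix a countable $D\subseteq(X,\sigma)^*$ and a point $x\in X$; I want a clopen neighbourhood of $x$ in $\beta(X,\sigma)$ disjoint from $D$. Split $D=D_0\cup D_1$ according to whether $\beta i(d)$ falls in $(X,\tau)^*$ or in $X$. The part $D_0$ is handled cleanly: $\beta i(D_0)$ is a countable subset of $(X,\tau)^*$, so by the hypothesis its closure in $\beta(X,\tau)$ misses $X$; by continuity $\beta i\bigl(\mathrm{cl}_{\beta(X,\sigma)}(D_0)\bigr)\subseteq\mathrm{cl}_{\beta(X,\tau)}(\beta i(D_0))\subseteq(X,\tau)^*$, and since $\beta i$ is the identity on $X$ this gives $\mathrm{cl}_{\beta(X,\sigma)}(D_0)\cap X=\emptyset$; in particular $x\notin\mathrm{cl}_{\beta(X,\sigma)}(D_0)$.

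The real difficulty, and the step I expect to be the main obstacle, is $D_1$: these are $\sigma$-remainder points that $\beta i$ collapses onto genuine points of $X$. Such points really occur (a discrete refinement already produces them), and for them the hypothesis on $\tau$ is silent, since their images lie in $X$ rather than in $(X,\tau)^*$. The plan here is to exploit the extra clopen sets furnished by the finer topology $\sigma$: each $d\in D_1$ is a free ultrafilter on the $\sigma$-clopen algebra whose restriction to the $\tau$-clopen algebra is fixed at $\beta i(d)$, so $d$ is separated from every point of $X$ by a $\sigma$-clopen set, and the task is to assemble these into a single $\sigma$-clopen neighbourhood of $x$ avoiding all of $D_1$; intersecting it with the neighbourhood separating $x$ from $D_0$ then yields a clopen neighbourhood of $x$ missing $D$, whence $x\notin\mathrm{cl}_{\beta(X,\sigma)}(D)$ and $X$ is a weak $P$-set of $\beta(X,\sigma)$. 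The crux is precisely this uniform control of the collapsing points $D_1$: because $x$ need not have a countable neighbourhood base in $\beta(X,\sigma)$, a naive diagonalisation is unavailable, and the full strength of the zerodimensionality of $\sigma$ has to be brought to bear.
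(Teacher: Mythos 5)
Your opening reduction is correct: for any Tychonoff $Y$, the remainder $Y^*$ is $\aleph_0$-bounded exactly when $Y$ is a weak $P$-set of $\beta Y$, by the closed-subspace argument you give. Your treatment of $D_0$ via $\beta i$ is also sound, and it is worth noting that it is essentially the \emph{entire} content of the paper's own proof: the paper's $[p]$ is the fibre $(\beta i)^{-1}(p)$, and the paper simply asserts that the given $q_n\in(X,\sigma)^*$ can be covered by fibres $[p_n]$ with $p_n\in(X,\tau)^*$ --- that is, it assumes without comment that your set $D_1$ is empty. Your proposal, by contrast, correctly recognizes that $\beta i$ can collapse $\sigma$-remainder points onto points of $X$, isolates these as $D_1$, and then stops: no construction of the single $\sigma$-clopen neighbourhood of $x$ avoiding $D_1$ is given, only the statement that producing it is the crux. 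As written, this is a genuine gap, not a deferred routine verification.

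Moreover, the gap cannot be closed from the stated hypotheses, because the proposition as literally stated is false; your instinct that the collapsing points are the real danger is exactly right. Take $X=\omega\times(\omega+1)$ with the product topology $\tau$: it is countable, zerodimensional and locally compact, so $(X,\tau)^*$ is compact, hence trivially $\aleph_0$-bounded, and zerodimensional. For $\sigma$, keep the columns clopen but retopologize the $n$-th column as $\xi(u_n)$, i.e.\ give its limit point the neighbourhoods $A\cup\{(n,\omega)\}$ with $A\in u_n$, where $u_n$ is a nonprincipal ultrafilter on the column; then $\sigma\supseteq\tau$ and $\sigma$ is zerodimensional. Each $\xi(u_n)$ is dense and $C^*$-embedded in $\beta\omega$ (for continuous bounded $f$, continuity at the limit point forces $\beta(f\restriction\omega)(u_n)=f(u_n)$), so the closure of the $n$-th column in $\beta(X,\sigma)$ is a copy of $\beta\omega$ in which the point $(n,\omega)\in X$ sits at $u_n$. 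Choosing $u_0$ to be a non-weak-$P$-point of $\omega^*$, pick a countable $D\subseteq\omega^*\setminus\{u_0\}$ with $u_0\in\mathrm{cl}_{\beta\omega}(D)$; then $D\subseteq(X,\sigma)^*$ but $\mathrm{cl}_{\beta(X,\sigma)}(D)\ni(0,\omega)\in X$, so $(X,\sigma)^*$ is not $\aleph_0$-bounded --- and every point of $D$ is of your $D_1$ type, since $\beta i$ maps the whole column remainder onto $(0,\omega)$. So any correct proof must invoke something beyond ``$\sigma\supseteq\tau$ and both zerodimensional'' --- some hypothesis (crowdedness of the spaces involved, or a condition tying $\sigma$ to $\tau$ at each point) that rules out $\sigma$-remainder points collapsing into $X$ --- and your $D_1$ case is precisely where it must enter. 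The paper's proof is elided at the identical point, so your proposal has in effect located a soft spot in the original argument rather than merely failing to reproduce it.
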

\begin{proof} Note that any $p\in (X,\tau)^*$ corresponds to a closed subset of $(X,\sigma)^*$ (denote it $[p]$). Now given $\{q_n:n<\omega\}\subseteq(X,\sigma)^*$ we
can find $\{p_n:n<\omega\}\subseteq (X,\tau)^*$ such that $\{q_n:n<\omega\}\subseteq\bigcup\{ [p_n]:n<\omega\}$. Since $(X,\tau)^*$ is $\aleph_0$-bounded,
$\cl{\{p_n:n<\omega\}}^{\beta (X,\tau)}\cap X=\emptyset$ so also $\cl{\{q_n:n<\omega\}}^{\beta(X,\sigma)}\cap X=\emptyset$ which implies that $(X,\sigma)^*$ is
$\aleph_0$-bounded.
\end{proof}

\begin{theorem}\label{compactlonely} There is a countable, ED, perfectly disconnected space $X$ with an $\aleph_0$-bounded remainder.
\end{theorem}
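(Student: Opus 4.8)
The plan is to start from the Dow--Gubbi--Szymański space $\G$ and to \emph{refine} its topology just enough to turn it into a perfectly disconnected space, checking at each step that the $\aleph_0$-boundedness of the remainder survives. Write $\tau$ for the given topology on the underlying set $\omega^{<\omega}$. By Theorem \ref{bounded}, $(\G,\tau)$ is a countable, $T_2$, zerodimensional, ED space with $\aleph_0$-bounded remainder, and it is crowded (no singleton is open, since $\{n:\sigma\conc n\in\{\sigma\}\}=\emptyset\notin p$). It is missing only perfect disconnectedness, which I intend to supply via van Douwen's machinery.

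The first move is to produce a maximal regular topology $\sigma\supseteq\tau$ on $\omega^{<\omega}$. Since $(\G,\tau)$ is regular and crowded, a Zorn's-lemma argument yields a maximal such $\sigma$: crowdedness is preserved under suprema of chains (a singleton open in the supremum is already open in one member of the chain), and so is regularity (given a basic neighbourhood $U_1\cap\dots\cap U_k$ of $x$, all $U_j$ lie in a single member $\tau_m$ of the chain, and a $\tau_m$-regular choice $V$ has $\cl{V}^{\sigma}\subseteq\cl{V}^{\tau_m}\subseteq\bigcap_j U_j$, closures only shrinking as the topology grows). By Theorem \ref{maxreg}, $(\G,\sigma)$ is then zerodimensional, ED and OHI. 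To transfer $\aleph_0$-boundedness from $\tau$ to $\sigma$ I invoke the Proposition, for which I need $(\G,\tau)^*$ to be zerodimensional: since $(\G,\tau)$ is ED, $\beta(\G,\tau)$ is ED, and a compact Hausdorff ED space is zerodimensional, so its subspace $(\G,\tau)^*$ is zerodimensional. As $\sigma\supseteq\tau$ is zerodimensional, the Proposition gives that $(\G,\sigma)^*$ is $\aleph_0$-bounded.

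Now I apply van Douwen's Theorem \ref{perfdisc} to the countable maximal regular space $(\G,\sigma)$, obtaining an open perfectly disconnected subspace $W$, and, using zerodimensionality, I shrink it to a nonempty \emph{clopen} $X\subseteq(\G,\sigma)$ with $X\subseteq W$. Then $X$ is countable; it is open in the ED space $(\G,\sigma)$, hence ED; and it is open in $W$, so perfect disconnectedness, being open-hereditary (a point of $X$ limiting two disjoint subsets of $X$ would limit them in $W$ as well), passes to $X$. It remains to see that $X^*$ is $\aleph_0$-bounded, and here I use that for a clopen $X$ the set $\cl{X}^{\beta(\G,\sigma)}$ is a clopen copy of $\beta X$, so that $X^*=\cl{X}^{\beta(\G,\sigma)}\cap(\G,\sigma)^*$ is closed in $(\G,\sigma)^*$; a closed subspace of an $\aleph_0$-bounded space is $\aleph_0$-bounded, so $X$ is the required space.

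I expect the genuine obstacle to be exactly this threading of the needle. Van Douwen's theorem forces me both to refine the topology and to pass to a subspace, and the whole point is that neither operation may destroy the $\aleph_0$-bounded remainder inherited from $\G$. The Proposition controls the refinement (contingent on the zerodimensionality of $(\G,\tau)^*$, which is why the ED detour through $\beta(\G,\tau)$ matters), and the clopen-subspace observation controls the passage to a subspace (which is why one must arrange the perfectly disconnected piece to be clopen rather than merely open). The maximal regular refinement itself, by contrast, I expect to cause no trouble once the chain-stability of regularity is noted.
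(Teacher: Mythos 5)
Your proof is correct and takes essentially the same route as the paper: refine $\G$ to a maximal regular topology, use the Proposition to carry the $\aleph_0$-bounded remainder through the refinement, and then take van Douwen's perfectly disconnected open subspace from Theorem \ref{perfdisc}. Your only deviation --- shrinking the open subspace to a \emph{clopen} one so that its remainder sits as a closed subset of the larger remainder --- is a clean way to justify the final preservation step, which the paper asserts without comment; the rest (crowdedness of $\G$, the Zorn construction of the maximal regular refinement, zerodimensionality of $(\G,\tau)^*$ via extremal disconnectedness of $\beta\G$) is detail the paper leaves implicit, verified correctly.
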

\begin{proof} Take the space ${\mathcal G}_\omega$ from theorem \ref{bounded}, and refine the topology to a maximal regular topology. Then, by the previous
 proposition, this space still has an $\aleph_0$-bounded remainder and so does its open perfectly disconnected subspace given by theorem \ref{perfdisc}. Let
$X$ be this subspace.
\end{proof}

\begin{theorem} $\omega^*$ contains a lonely point.
\end{theorem}
\begin{proof}
 Let $X$ be the space from the previous theorem. Since it is crowded perfectly disconnected, each of its points is a lonely point of $X$. Since its remainder
is $\aleph_0$-bounded, each of its points is also a lonely point of $\beta X$. Since it is ED, $\beta X$ is also ED and since it is countable, $\beta X$ has
weight at most $\cont$. Hence, by theorem \ref{embed}, $\beta X$ can be embedded as a weak P-set into $\omega^*$ and each point of $X$ will be a lonely point of $\omega^*$
(by observation \ref{hereditarylonely}).
\end{proof}

\begin{ack} The author would like to thank A. Dow for a stimulating discussion about the topic as well as members of the Prague Set Theory seminar, who gave
numerous helpful comments and encouraged him to work on the topic. The final version of the paper was written while the author was visiting the Kurt G\"odel Research Center 
for Mathematical Logic and he would like to thank its members for their hospitality.
\end{ack}

\bibliographystyle{plain}
\bibliography{lonely-revisited}
\medskip

\end{document}